\newcommand{\Sets}{{\mathcal{S}ets}}
\newcommand{\sSets}{{s\mathcal{S}ets}}
\newcommand{\inn}{\mathop{\rm in}} 
\newcommand{\Hom}{\mathop{\rm Hom}}
\newcommand{\Oper}{\mathop{\rm Oper}} 
\newcommand{\Coll}{\mathop{\rm Coll}}
\newcommand{\End}{\mathop{\rm End}}
\newcommand{\V}{\mathcal{V}}
\newcommand{\E}{\mathcal{E}}
\newcommand{\Sp}{{\mathcal{S}p}^{\Sigma}}
\theoremstyle{plain}
\newtheorem{theorem}{Theorem}[section]
\newtheorem{corollary}[theorem]{Corollary}
\newtheorem{lemma}[theorem]{Lemma}
\newtheorem{proposition}[theorem]{Proposition}
\theoremstyle{definition}
\newtheorem{definition}[theorem]{Definition}
\theoremstyle{remark}
\newtheorem{remark}[theorem]{Remark}
\newtheorem{example}[theorem]{Example}
\numberwithin{equation}{section} \numberwithin{figure}{section}
\begin{document}
\title[A model structure for coloured operads in symmetric spectra]{A model structure for coloured operads \\ in symmetric spectra}
\author[J. J. Guti\'errez]{Javier J. Guti\'errez}
\address{Centre de Recerca Matem\`atica, Apartat 50, 08193 Bellaterra, Spain}
\email{jgutierrez@crm.cat}
\author[R. M. Vogt]{Rainer M. Vogt}
\address{Universit\"at Osnabr\"uck, Fachbereich Mathematik/Informatik, Albrechtstr.\ 28, 
49069 Osnabr\"uck, Germany}
\email{rainer@mathematik.uni-osnabrueck.de}

\keywords{Coloured operad; Symmetric spectra; Localization}
\subjclass[2000]{18D50, 55P43, 55P60}

\begin{abstract}
We describe a model structure for coloured operads with values in the category of symmetric
spectra (with the positive model structure), in which fibrations and
weak equivalences are defined at the level of the underlying collections. 
This allows us to treat $R$-module spectra (where $R$ is a cofibrant ring spectrum) as algebras over a cofibrant spectrum-valued operad with $R$ as its first term. Using this model structure, we give sufficient conditions for homotopical localizations in the category of symmetric spectra to preserve module structures.
\end{abstract}

\maketitle

\section{Introduction}
In a series of papers \cite{BM03}, \cite{BM06}, \cite{BM07}, Berger and Moerdijk studied the homotopy theory of operads and coloured operads in monoidal model categories from an axiomatic point of view. For any cofibrantly generated monoidal model category $\V$ satisfying certain conditions, they described a model structure for the category of $C$\nobreakdash-coloured operads (for a fixed set of colours $C$) in which the fibrations and the weak equivalences were defined at the level of the underlying collections. The model structure was, in fact, transferred from the model structure on the category of $C$\nobreakdash-coloured collections $ \Coll_C(\V)$ via the free-forgetful adjunction
$$
\xymatrix{
F : \Coll_C(\V) \ar@<3pt>[r] & \ar@<3pt>[l] {\rm Oper}_C(\V) : U.
}
$$

This model structure is very useful for proving general results concerning constructions with
coloured operads. In this way, one can define, for an operad $P$, the notion of homotopy $P$-algebra
as an algebra over a cofibrant resolution of $P$. One can also define a
generalization of the $W$-construction of Boardman-Vogt in monoidal
model categories equipped with an interval \cite{BM06}. Berger and Moerdijk also proved a
generalization of the homotopy invariance property of algebras over
cofibrant operads, extending results of \cite{BV73}.

The conditions imposed by Berger and Moerdijk on $\V$  concern the cofibrancy of the unit of the monoidal structure, the existence of a symmetric monoidal fibrant replacement functor, and the existence of a coalgebra interval with a cocommutative comultiplication \cite[Theorem 2.1]{BM07}.  These conditions hold for the categories of simplicial sets, compactly generated spaces and chain complexes over any commutative ring. However, they do not hold for (symmetric) spectra. In the category of symmetric spectra with the positive model structure, the unit spectrum $S$ is not cofibrant and the existence of a monoidal fibrant replacement functor is not known. In fact, by an argument of Lewis~\cite{Lew91}, no monoidal model category of spectra can simultaneously have a cofibrant unit and a symmetric monoidal fibrant replacement functor.

One can try to avoid the cofibrancy condition on the unit by enriching the base category over another monoidal model category in which the unit and the intervals are nicer~\cite[Theorem 2.4]{Kro}. In his work, Kro considered the category of orthogonal spectra with the positive model structure enriched over compactly generated topological spaces, and he proved that the categories of (one-coloured) reduced operads and positive operads in the category of orthogonal spectra admit a transferred model structure. We cannot expect to apply his argument to the category of symmetric spectra, since a symmetric monoidal fibrant replacement is still needed and Kro's functor for orthogonal spectra is not valid for symmetric spectra \cite[Remark 3.4]{Kro}. 

Here we propose an alternative approach suitable for the category of $C$\nobreakdash-co\-lou\-red operads with values in symmetric spectra, where $C$ is any set of colours. Given a symmetric monoidal category $\V$, operads in $\V$ act on any monoidal $\V$\nobreakdash-category $\E$.  For a fixed set of colours $C$ and any monoidal $\V$\nobreakdash-category $\E$, we describe a coloured operad in $\V$ acting on $\E$ whose algebras are $C$\nobreakdash-co\-lou\-red operads in $\E$. The category of symmetric spectra with the positive model structure is a monoidal model category enriched and tensored over the model category of simplicial sets.  Combining this with the fact that for any coloured operad $P$ in simplicial sets acting on symmetric spectra there is a model structure on the category of $P$-algebras~\cite[Theorem 1.3]{EM06}, we produce a model structure for the category of $C$\nobreakdash-coloured operads in symmetric spectra in which fibrations and weak equivalences are defined at the level of the underlying collections.

We use this model structure to prove that enriched homotopical localizations in the category of symmetric spectra preserve algebras over cofibrant operads. In particular, they preserve $R$-module spectra, where $R$ is a cofibrant ring spectrum. 
Our proof simplifies considerably the one given in \cite[Theorem 6.1]{CGMV} (which used a coloured operad with two colours in the category of simplicial sets acting on symmetric spectra), since we are now allowed to use spectrum-valued operads.  
More concretely, for any ring spectrum $R$ there is an operad $P_R$ with $P_R(1)=R$ and $0$ otherwise, whose algebras are the $R$-modules. This operad is cofibrant if $R$ itself is a cofibrant ring spectrum. Thus, if $L$ is an enriched homotopical localization functor in the category of symmetric spectra and $M$ is an $R$-module, where $M$ is cofibrant (as a spectrum) and $R$ is a cofibrant ring spectrum, then $LM$ has a homotopy unique $R$-module structure such that the localization map $M\longrightarrow LM$ is a map of $R$-modules.

\section{Coloured operads and algebras in enriched categories}
Coloured operads may be defined in any symmetric monoidal category $\V$ and algebras over them
make sense in any symmetric monoidal category $\E$ enriched over $\V$. In this first section, we review
some terminology of enriched categories \cite[\S 6]{Bor94},  \cite[\S 4.1]{Hov99}, \cite{Kel82}, and recall the definition of coloured operads and their algebras
\cite{BV73}, \cite[\S 2]{EM06}, \cite{Lei}.

Throughout the paper, $\V$ will denote a cocomplete closed
symmetric mo\-noi\-dal category with unit $I$, tensor product $\otimes$ and internal. We will denote by $0$ an initial object of $\V$.

A functor $F\colon \V\longrightarrow \V'$ between symmetric monoidal categories is called \emph{symmetric monoidal} if it is equipped
with a unit $I_{\V'}\longrightarrow F(I_{\V})$ and a binatural transformation $F(-)\otimes_{\V'} F(-)\longrightarrow F(-\otimes_{\V} -)$ satisfying the usual associativity, symmetry and unit conditions. A symmetric monoidal functor is called  \emph{strong} if the structure maps are isomorphisms.

\subsection{Enriched categories}
A \emph{category enriched over $\V$} is a category $\E$ together
with an enrichment functor
$$
\Hom\nolimits_{\E}(-,-)\colon \E^{\rm op}\times \E\longrightarrow \V
$$
and, for every $X$, $Y$ and $Z$ in $\E$, composition morphisms 
$$
\Hom\nolimits_{\E}(Y,Z)\otimes \Hom\nolimits_{\E}(X,Y)\longrightarrow \Hom\nolimits_{\E}(X,Z)
$$
satisfying the associativity law, and morphisms
$$
I\longrightarrow \Hom\nolimits_{\E}(X,X)
$$
which are left and right identities for the composition morphisms.

A \emph{$\V$-module category} $\E$ is a category
enriched and tensored over $\V$, i.e., equipped with a functor
$$
-\otimes -\colon \V\times \E\longrightarrow \E
$$
such that the following two conditions hold:
\begin{itemize}
\item[(i)] There are natural isomorphisms
\begin{equation}
A\otimes(B\otimes X)\cong (A\otimes B)\otimes X, \quad I\otimes X\cong X
\label{nat_iso1}
\end{equation}
for every $A$ and $B$ in $\V$ and every $X$ in $\E$, rendering certain coherence diagrams commutative (see \cite[\S 4.1]{Hov99} for details).
\item[(ii)] There are natural isomorphisms
\begin{equation}
\E(A\otimes X, Y)\cong \V(A, \Hom\nolimits_{\E}(X,Y))
\label{natiso}
\end{equation}
for every $X$ and $Y$ in $\E$, and every $A$ in $\V$.
\end{itemize}
In particular, when $A=I$, the condition $\E(X,Y)\cong\V(I,\Hom_{\E}(X,Y))$ holds for all $X$, $Y$. Note that~(\ref{natiso}) implies that the enriched version of this adjunction also holds, that is, there are natural isomorphisms
$$
\Hom\nolimits_{\E}(A\otimes X, Y)\cong \Hom\nolimits_{\V}(A, \Hom\nolimits_{\E}(X,Y))
$$
for every $X$ and $Y$ in $\E$, and every $A$ in $\V$, where $\Hom_{\V}$ denotes the internal hom of $\V$. To prove this, it is enough to check that there are natural isomorphisms
$$
\V(W, \Hom\nolimits_{\E}(A\otimes X, Y))\cong\V(W,\Hom\nolimits_{\V}(A, \Hom\nolimits_{\E}(X,Y))),
$$
for every $W$ in $\V$, and this follows from~(\ref{natiso}) and~(\ref{nat_iso1}).
 
A \emph{monoidal $\V$-module category} $\E$ is a symmetric monoidal category that is also a $\V$-module category and such that
the $\V$-action commutes with the monoidal product of $\E$. That is, there are natural isomorphisms
\begin{equation}
A\otimes (X\otimes Y)\cong (A\otimes X)\otimes Y
\label{nat_iso2}
\end{equation}
for every $X$ and $Y$ in $\E$ and every $A$ in $\V$; see~\cite[\S1.1.2, \S1.1.12]{Fre09}. We will refer to monoidal $\V$-module categories as 
\emph{monoidal $\V$-categories}. Any closed symmetric monoidal category $\V$ is enriched over itself and a monoidal $\V$-category.

\begin{lemma}
If $\E$ is a monoidal $\V$-category, then the functor from $\V$ to $\E$ sending every object $A$ in $\V$ to  $\widetilde{A}=A\otimes I_{\E}$, where $I_{\E}$ denotes the unit of the monoidal structure of $\E$, is strong symmetric monoidal. 
\label{strong_monoidal}
\end{lemma}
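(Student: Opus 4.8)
The plan is to exhibit the required unit and multiplication constraints as composites of the structural isomorphisms carried by the monoidal $\V$-module structure on $\E$, and then to verify the coherence axioms. For the unit constraint, note that $\widetilde{I}=I\otimes I_{\E}$, so the left unit isomorphism $I\otimes X\cong X$ of~(\ref{nat_iso1}), taken at $X=I_{\E}$, furnishes a canonical isomorphism $\widetilde{I}\cong I_{\E}$, whose inverse is the desired map $I_{\E}\longrightarrow\widetilde{I}$.

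For the multiplication constraint $\widetilde A\otimes_{\E}\widetilde B\cong\widetilde{A\otimes B}$ I would form the composite
\[
\begin{aligned}
(A\otimes I_{\E})\otimes_{\E}(B\otimes I_{\E})
&\cong A\otimes\bigl(I_{\E}\otimes_{\E}(B\otimes I_{\E})\bigr)\\
&\cong A\otimes (B\otimes I_{\E})
\cong (A\otimes B)\otimes I_{\E},
\end{aligned}
\]
in which the first isomorphism is~(\ref{nat_iso2}) (the commutation of the $\V$-action with the monoidal product of $\E$), the second is the left unit isomorphism of the monoidal structure on $\E$ applied to $B\otimes I_{\E}$, and the third is the associativity isomorphism of the $\V$-action from~(\ref{nat_iso1}). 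Each factor is natural in $A$ and $B$, so the composite is a natural isomorphism $\widetilde A\otimes_{\E}\widetilde B\cong\widetilde{A\otimes B}$. Together with the unit constraint, this supplies the data of a strong symmetric monoidal functor; what remains is to check that this data is coherent.

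The main work, and the step I expect to be the genuine obstacle, is verifying the three coherence conditions—associativity, unit, and symmetry—for the constraints just built. The associativity condition unfolds into a diagram comparing the two ways of passing from $\widetilde A\otimes_{\E}\widetilde B\otimes_{\E}\widetilde C$ to $\widetilde{A\otimes B\otimes C}$, which combines the pentagon for $\otimes_{\E}$, the associativity coherence of the $\V$-action in~(\ref{nat_iso1}), and the coherence of~(\ref{nat_iso2}); the unit condition reduces to the compatibility of the unit isomorphisms of $\E$ with the $\V$-action; and the symmetry condition forces the left unit isomorphism used above to be traded for the right unit, which is exactly where one invokes the compatibility of the symmetry of $\otimes_{\E}$ with the symmetry of $\otimes$ in $\V$. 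Since all of these diagrams are assembled entirely from the coherence isomorphisms whose commutativity is part of the definition of a monoidal $\V$-module category (see~\cite[\S4.1]{Hov99}, \cite[\S1.1.2, \S1.1.12]{Fre09}), each commutes by the relevant coherence theorem, so that every diagram of such canonical isomorphisms commutes. Carrying out these diagram chases is routine in principle but constitutes the bulk of the proof.
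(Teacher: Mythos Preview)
Your proposal is correct and follows exactly the same route as the paper: the same unit isomorphism $I\otimes I_{\E}\cong I_{\E}$ and the identical chain of isomorphisms $(A\otimes I_{\E})\otimes_{\E}(B\otimes I_{\E})\cong A\otimes(I_{\E}\otimes_{\E}(B\otimes I_{\E}))\cong A\otimes(B\otimes I_{\E})\cong(A\otimes B)\otimes I_{\E}$ using~(\ref{nat_iso1}) and~(\ref{nat_iso2}). The only difference is that the paper stops after displaying these isomorphisms, leaving the coherence axioms entirely implicit, whereas you spell out that they must be checked and indicate how; your treatment is thus strictly more thorough than the paper's.
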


\begin{proof}
Using the natural isomorphisms~(\ref{nat_iso1}) and~(\ref{nat_iso2}), we have that  $\widetilde{I}\cong I_{\E}$ and that
\begin{multline}\notag
\widetilde{A}\otimes\widetilde{B}\cong(A\otimes I_{\E})\otimes (B\otimes I_{\E})\cong
A\otimes(I_{\E}\otimes (B\otimes I_{\E}))\cong \\
A\otimes(B\otimes I_{\E})\cong
(A\otimes B)\otimes I_{\E}\cong \widetilde{A\otimes B}
\end{multline}
for every $A$ and $B$ in $\V$.
\end{proof}

\subsection{Coloured operads}
Let $C$ be any set, whose elements will be called \emph{colours}. A
\emph{$C$\nobreakdash-coloured collection} $K$ in $\V$ consists of a set of
objects $K(c_1,\ldots, c_n;c)$ in $\V$ for each $(n+1)$-tuple of
colours $(c_1,\ldots, c_n;c)$ equipped with a right action of the
symmetric group $\Sigma_n$ by means of maps
$$
\alpha^*\colon K(c_1,\ldots, c_n;c)\longrightarrow
K(c_{\alpha(1)},\ldots, c_{\alpha(n)};c),
$$
where $\alpha\in\Sigma_n$ (if $n=0$ or $n=1$, then $\Sigma_n$ is the
trivial group).

A \emph{morphism} of $C$\nobreakdash-coloured collections $\varphi\colon
K\longrightarrow K'$ is a family of maps
$$
\varphi_{c_1,\ldots,c_n;c}\colon K(c_1,\ldots,c_n;c)\longrightarrow
K(c_1,\ldots, c_n; c)
$$
in $\V$, ranging over all $n\ge 0$ and all $(n+1)$-tuples
$(c_1,\ldots, c_n;c)$, and compatible with the action of the
symmetric groups. The category of $C$\nobreakdash-coloured collections in $\V$
is denoted by $\Coll_C(\V)$.

\begin{definition}
A \emph{$C$\nobreakdash-coloured operad} $P$ in $\V$ is a $C$\nobreakdash-coloured
collection equip\-ped with unit maps $I\longrightarrow P(c;c)$ for
every $c\in C$ and, for every $(n+1)$-tuple of colours
$(c_1,\ldots, c_n;c)$ and $n$ given tuples
\[
(a_{1,1},\ldots, a_{1,k_1}; c_1),\ldots, (a_{n,1},\ldots, a_{n,k_n};
c_n),
\]
a \emph{composition product} map
$$\xymatrix{
P(c_1,\ldots, c_n;c)\otimes P(a_{1,1},\ldots,
a_{1,k_1};c_1)\otimes\cdots\otimes P(a_{n,1},\ldots,
a_{n,k_n};c_n)\ar[d]
\\ P(a_{1,1},\ldots,a_{1,k_1},a_{2,1},\ldots,a_{2,k_2},\ldots,a_{n,1},\ldots,a_{n,k_n};c),
} $$ compatible with the action of the symmetric groups and subject
to associativity and unitary compatibility relations; see, for
example, \cite[\S 2]{EM06}.
\end{definition}

\begin{remark} 
Alternatively, one can replace the composition product
in the definition of a $C$\nobreakdash-coloured operad by the \emph{$\circ_i$-operations}
$$
\xymatrix{
P(c_1,\ldots, c_i,\ldots,c_n;c)\otimes P(a_1,\ldots,a_m;c_i) \ar[d]^{\circ_i}\\
 P(c_1,\ldots,c_{i-1},a_1,\ldots,a_m,c_{i+1},\ldots,c_n;c) }
$$
for every $(n+1)$-tuple $(c_1,\ldots, c_n, c)$, every
$m$-tuple $(a_1,\ldots, a_m)$, and all $1\leq i\leq n$. The
$\circ_i$-operations are compatible with the action of
$\Sigma_n$ and are subject to the usual associativity and unitary
compatibility relations; see, for example, \cite[\S 2.1]{Lei}. We
will make use of both definitions in the next section.
\end{remark}

A morphism of $C$\nobreakdash-coloured operads is a morphism of the
underlying $C$\nobreakdash-co\-lou\-red collections that is compatible
with the unit maps and the composition product maps (or the $\circ_i$-operations).
The category of $C$\nobreakdash-coloured operads in $\V$ will be denoted by ${\rm
Oper}_C(\V)$. There is a free-forgetful adjunction
\begin{equation}
\xymatrix{
F : \Coll_C(\V) \ar@<3pt>[r] & \ar@<3pt>[l] {\rm Oper}_C(\V) : U
}
\label{freeforgetful}
\end{equation}
where $U$ is the forgetful functor, and the left adjoint is the free coloured operad generated by
a collection.

\subsection{Algebras over coloured operads}
\label{algebras}
Algebras over operads can be
defined in any monoidal $\V$-category $\E$, as
follows. Let $\E^{C}$ denote the product category $\prod_{c\in C}\E$
indexed by the set of colours $C$. For every object ${\bf
X}=(X(c))_{c\in C}\in \E^C$, the \emph{endomorphism $C$\nobreakdash-coloured
operad} $\End({\bf X})$ in $\V$ associated with ${\bf X}$ is defined
by
$$
\End({\bf X})(c_1,\ldots, c_n;
c)=\Hom\nolimits_{\E}(X(c_1)\otimes\cdots\otimes X(c_n);X(c))
$$
where $X(c_1)\otimes\cdots\otimes X(c_n)$ is meant to be the unit of the monoidal category $\E$ if
$n=0$. The composition product is ordinary composition and the
$\Sigma_n$-action is defined by permutation of the factors.

\begin{definition}
Let $P$ be any $C$\nobreakdash-coloured operad in $\V$. An \emph{algebra over $P$} or a \emph{P-algebra} in
$\E$ is an object ${\bf X}=(X(c))_{c\in C}$ of $\E^C$ together with
a morphism
$$
P\longrightarrow \End({\bf X})
$$
of $C$\nobreakdash-coloured operads in $\V$.
\end{definition}
Equivalently, a $P$-algebra in $\E$ can be defined as a family of objects $X(c)$ in
$\E$ for all $c\in C$, together with maps
$$
P(c_1,\ldots, c_n;c)\otimes X(c_1)\otimes\cdots\otimes
X(c_n)\longrightarrow X(c)
$$
for every $(n+1)$-tuple $(c_1,\dots,c_n;c)$, compatible
with the symmetric group action, the units of $P$, and subject to the usual associativity relations.

A \emph{map of $P$-algebras} $\mathbf{f}\colon\mathbf{X}\longrightarrow \mathbf{Y}$ is a family of maps
$(f_c\colon X(c)\longrightarrow Y(c))_{c\in C}$ in $\mathcal{E}$ such that the diagram of $C$-coloured collections
$$
\xymatrix{
P\ar[r] \ar[d] & \End(\mathbf{X}) \ar[d] \\
\End(\mathbf{Y}) \ar[r]& \Hom(\mathbf{X},\mathbf{Y})
}
$$
commutes, where the top and left arrows are the $P$-algebra structures on $\mathbf{X}$ and $\mathbf{Y}$ respectively, and the $C$-coloured collection $\Hom(\mathbf{X},\mathbf{Y})$ is defined as
$$
\Hom(\mathbf{X},\mathbf{Y})(c_1,\ldots, c_n;c)=\Hom\nolimits_{\mathcal{E}}(X(c_1)\otimes\cdots\otimes X(c_n), Y(c)).
$$
The arrows ${\rm End}({\bf X})\longrightarrow {\rm Hom}({\bf X},
{\bf Y})$ and ${\rm End}({\bf Y})\longrightarrow {\rm Hom}({\bf X},
{\bf Y})$ are induced by ${\bf f}$ by composing on each side.

If the category $\mathcal{V}$ has pullbacks, then a map $\mathbf{f}$ of $P$-algebras can be seen as a map of $C$-coloured operads
$$
P\longrightarrow \End(\mathbf{f}),
$$
where the $C$-coloured operad $\End(\mathbf{f})$ is obtained as the pullback of the diagram of $C$-coloured collections
$$
\xymatrix{
\End(\mathbf{f})\ar@{.>}[r] \ar@{.>}[d] & \End(\mathbf{X}) \ar[d] \\
\End(\mathbf{Y}) \ar[r]& \Hom(\mathbf{X},\mathbf{Y}).
}
$$
The $C$\nobreakdash-coloured collection ${\rm End}({\bf f})$ inherits
indeed a $C$\nobreakdash-coloured operad structure in $\V$ from the
$C$\nobreakdash-coloured operads ${\rm End}({\bf X})$ and ${\rm
End}({\bf Y})$, as observed in \cite[Theorem~3.5]{BM03}. We will denote the category of $P$-algebras in $\E$ by ${\rm Alg}_P(\E)$.

For any monoidal $\mathcal{V}$-category $\E$, the functor from $\V$ to $\E$ sending each object $A$ in $\V$ to  $\widetilde{A}=A\otimes I_{\E}$ is strong symmetric monoidal (Lemma~\ref{strong_monoidal}). Hence, it sends $C$\nobreakdash-coloured operads in $\V$ to $C$\nobreakdash-coloured operads in $\E$. Moreover, if $P$ is a $C$\nobreakdash-coloured operad in $\V$, then an object ${\bf X}$ of $\E^C$ is a $P$-algebra if and only if it is a $\widetilde{P}$-algebra.

\section{Coloured operads as algebras}
\label{colopasalg}
The following is an example of a coloured operad whose algebras are $C$\nobreakdash-co\-lou\-red operads
for a fixed set of colours $C$. It is initially constructed in the category of sets, but can be transported
to an arbitrary symmetric monoidal category via the strong symmetric monoidal functor that sends a set to a coproduct of copies
of the unit of the monoidal category (indexed by the elements of the set). The description of this operad
is made in terms of trees, so we need to introduce some terminology on them first.

\subsection{Trees}
A \emph{tree} $T$ is a connected finite graph with no loops. 
We denote the set of vertices of $T$ by $V(T)$ and the set of edges of $T$ by $E(T)$. Edges are directed. There are two different types of edges: the ones with a vertex at both ends, called \emph{inner edges}, and the ones with
a vertex only at one end or with no vertices, called \emph{external edges}. 
A \emph{rooted tree} is a tree where each vertex $v$ has exactly one outgoing edge denoted by ${\rm out}(v)$ and a set ${\rm in}(v)$ of incoming edges whose elements are called \emph{inputs} of $v$ (note that ${\rm in}(v)$ can be empty). The cardinality of
${\rm in}(v)$ is called the \emph{valence} of $v$. Consequently, in our trees there is a unique external edge leaving a vertex; we call it the \emph{root} or
\emph{output edge} of $T$. The other external edges form the set ${\rm in}(T)$ of \emph{input edges} or \emph{leaves} of $T$.

A \emph{planar rooted tree} is a rooted tree $T$ together with a linear ordering of
$\inn(v)$ for each vertex $v$ of $T$.

When drawing rooted trees in the plane, we represent them as oriented towards the
output, drawn at the bottom, with the canonical orientation from the leaves towards the root. In this case,
the linear order appears from reading the incoming edges from left to right.

\begin{definition}
Let $C$ be any set. A \emph{planar rooted $C$\nobreakdash-coloured tree} is a
planar rooted tree $T$ together with a function $c_T\colon E(T)\longrightarrow C$, called
a \emph{colouring function}.
\end{definition}

\subsection{The coloured operad $S^C$ in sets}
Let $C$ be a set of colours. We define a $D$-coloured collection
$S^{C}$ in $\Sets$, where
$$
D=\{(c_1,\ldots, c_n; c)\mid c_i, c\in C, \, n\ge 0\}.
$$
This collection can be endowed with a $D$-coloured operad structure
whose algebras are $C$\nobreakdash-coloured operads in $\Sets$ as follows. We use the following notation for the elements of the set $D$:
$$
\overline{c}_i=(c_{i,1},\ldots, c_{i,k_i}; c_i) \,\mbox{ and }\,
\overline{a}=(a_1,\ldots, a_m;a).
$$
For each $(n+1)$-tuple of
colours $(\overline{c}_1,\ldots, \overline{c}_n; \overline{a})$, the
elements of $S^{C}(\overline{c}_1,\ldots, \overline{c}_n;
\overline{a})$ are equivalence classes of triples $(T,\sigma,\tau)$,
where:
\begin{itemize}
\item[{\rm (i)}] $T$ is a planar rooted $C$\nobreakdash-coloured tree with $m$ input edges coloured by $a_1,\ldots, a_m$, a root edge coloured by
$a$, and $n$ vertices.
\item[{\rm (ii)}] $\sigma$ is a bijection $\sigma\colon\{1,\ldots,n\}\longrightarrow V(T)$ with the property that
$\sigma(i)$ has $k_i$ input edges coloured from left to right by $c_{i,1},\ldots, c_{i,k_i}$ and one output edge
coloured by $c_i$.
\item[{\rm (iii)}] $\tau$ is a bijection $\tau\colon\{1,\ldots, m\}\longrightarrow \inn(T)$ such that $\tau(i)$ has
colour $a_i$.
\end{itemize}
Two such triples $(T,\sigma, \tau)$, $(T'\sigma',\tau')$ are equivalent if and only if there is a planar
isomorphism $\varphi\colon T\longrightarrow T'$ such that $\varphi\circ\sigma=\sigma'$, $\varphi\circ \tau=\tau'$ and
$\varphi$ respects the colouring, i.e., if $e$ is an edge of $T$ of colour $c$, then the edge $\varphi(e)$ in $T'$
has colour $c$ too.

\begin{example}
If $C=\{a,b,c\}$, then  an element of
$$
S^C((a,b;c),(b,b;a),(\,\,\,;a),(c,a,a;b); (b,b,a,c;c))
$$
is represented, for example, by a tree
$$
\xymatrixcolsep{1pc}
\xymatrixrowsep{1pc}
\entrymodifiers={=<1pc>[o][F-]} \xymatrix{
*{^2}\ar@{-}[dr]|b & *{} & *{^1}\ar@{-}[dl]|b & *{} & *{^4}\ar@{-}[dr]|c & 3 \ar@{-}[d]|a& *{^3} \ar@{-}[dl]|a\\
*{} & 2 \ar@{-}[drr]|a & *{} & *{} & *{} & 4\ar@{-}[dll]|b & *{} \\
*{} & *{} & *{} & 1\ar@{-}[d]|-c & *{} & *{} & *{} \\
*{} & *{} & *{} & *{} & *{} & *{} & *{}
}
$$
\end{example}

Any permutation $\alpha\in\Sigma_n$ induces a map
$$
\alpha^*\colon S^C(\overline{c}_1,\ldots, \overline{c}_n; \overline{a})\longrightarrow
S^C(\overline{c}_{\alpha(1)},\ldots, \overline{c}_{\alpha(n)}; \overline{a})
$$
that sends $(T,\sigma,\tau)$ to $(T, \sigma\circ\alpha, \tau)$. That is, $\alpha^*(T)$ is the same tree as
$T$ but with a renumbering of the vertices given by $\alpha$.

Let $\alpha$ be any element in $\Sigma_m$ and $\overline{a}_1=(a_{\alpha(1)},\ldots, a_{\alpha(m)}; a)$.
Then the set $S^C(\overline{a}_1; \overline{a})$ can be identified with the subset of elements of $\Sigma_m$
that permute the element $(a_{\alpha(1)},\ldots, a_{\alpha(m)})$ into $(a_1,\ldots, a_m)$. In particular, if
$\overline{a}=\overline{a}_1$ then the set $S^C(\overline{a}; \overline{a})$ can be identified with
the (opposite) subgroup of $\Sigma_m$ that leaves the colours $a_1,\ldots, a_m$ invariant.

\begin{proposition}
The $D$-coloured collection $S^C$ admits a $D$-coloured operad structure in $\Sets$.
\end{proposition}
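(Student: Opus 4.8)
The plan is to equip $S^C$ with explicit unit elements and a composition product, then verify the operad axioms, using the combinatorial description via trees to make the definitions natural.

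First I would define the units. For each object $\overline{a}=(a_1,\ldots,a_m;a)\in D$, the unit in $S^C(\overline{a};\overline{a})$ should be the class of the triple $(C_{\overline{a}},\id,\id)$, where $C_{\overline{a}}$ is the \emph{corolla} on one vertex: a single vertex whose output edge is coloured $a$ and whose $m$ input edges are coloured, from left to right, $a_1,\ldots,a_m$. Here $\sigma\colon\{1\}\to V(C_{\overline{a}})$ and $\tau\colon\{1,\ldots,m\}\to\inn(C_{\overline{a}})$ are the evident bijections. Under the identification noted just before the statement, this is exactly the identity element of the (opposite) subgroup of $\Sigma_m$ fixing $(a_1,\ldots,a_m)$, so the description is consistent.

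Next I would define the composition product. Given a triple $(T,\sigma,\tau)$ in $S^C(\overline{c}_1,\ldots,\overline{c}_n;\overline{a})$ and, for each $j=1,\ldots,n$, a triple $(T_j,\sigma_j,\tau_j)$ in $S^C(\overline{d}_{j,1},\ldots,\overline{d}_{j,k_j};\overline{c}_j)$, I form a new tree by \emph{grafting}: each vertex $\sigma(j)$ of $T$ is expanded into the tree $T_j$. Concretely, the $k_j$-valent vertex $\sigma(j)$ (whose incident edges carry the colours recorded in $\overline{c}_j$) is replaced by the whole tree $T_j$, identifying the root edge of $T_j$ with $\out(\sigma(j))$ and the $k_j$ input edges of $T_j$ with the inputs of $\sigma(j)$; these colours match precisely because the codomain colour of the $j$-th factor equals the colour $\overline{c}_j$ labelling that vertex. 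The resulting tree $T'$ has $\sum_j(\text{number of vertices of }T_j)$ vertices, and its new vertex-labelling $\sigma'$ and leaf-labelling $\tau'$ are assembled from the $\sigma_j$ and from $\tau$ in the order dictated by $\sigma$; the leaf colours of $T'$ are the concatenation of the input colour tuples of the $T_j$, matching the target object in $D$. I would check this is well defined on equivalence classes, since a planar colouring isomorphism on any input, or on $T$ itself, induces one on the grafted tree.

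The remaining work is to verify the three operad axioms: associativity of the composition product, left and right unitarity, and equivariance with respect to the $\Sigma_n$-action $(T,\sigma,\tau)\mapsto(T,\sigma\circ\alpha,\tau)$. Each of these reduces to a statement about grafting of planar rooted coloured trees. Associativity holds because grafting is associative up to the canonical planar isomorphism identifying iterated graftings (inserting $T_j$ into $T$ and then inserting the $T_{j,l}$ into the copy of $T_j$ gives the same tree, up to planar coloured isomorphism, as first grafting the $T_{j,l}$ into $T_j$ and then grafting the result into $T$). Unitarity holds because grafting a corolla $C_{\overline{c}_j}$ onto a vertex of valence matching $\overline{c}_j$ returns the original tree, and grafting $(T,\sigma,\tau)$ onto the single vertex of a corolla does likewise. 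Equivariance is a bookkeeping check that relabelling the vertices of $T$ by $\alpha$ before grafting agrees with the induced relabelling after grafting. I expect the main obstacle to be purely notational: making the bijections $\sigma'$ and $\tau'$ and the grafting operation precise enough that the associativity pentagon and the equivariance diagrams commute strictly on equivalence classes, rather than merely up to a further choice of isomorphism. Once grafting is set up carefully, all three axioms are routine consequences of the corresponding elementary facts about planar rooted trees.
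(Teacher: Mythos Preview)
Your approach is the same as the paper's: units are the corollas, and composition is vertex substitution---replace each vertex $\sigma(j)$ of $T$ by the tree $T_j$, concatenate the vertex numberings $\sigma_j$ in the order dictated by $\sigma$, and inherit the leaf numbering from $\tau$. The paper, like you, leaves the verification of associativity, unit, and equivariance as routine.

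There is one slip worth fixing. After grafting, the leaves of $T'$ are \emph{not} ``the concatenation of the input colour tuples of the $T_j$''. The leaves of each $T_j$ are identified with the input edges of the vertex $\sigma(j)$ in $T$ (the paper makes this identification explicit via $\tau_j$: the $p$-th input of $\sigma(j)$, in its left-to-right order, is matched with the leaf $\tau_j(p)$ of $T_j$). Those input edges of $\sigma(j)$ that were already internal in $T$ remain internal in $T'$; only those that were leaves of $T$ survive as leaves of $T'$. Hence the leaves of $T'$ are exactly the leaves of $T$, coloured $a_1,\ldots,a_m$ and numbered by $\tau$, which is what guarantees that the composite lies in $S^C(\overline d_{1,1},\ldots,\overline d_{n,k_n};\overline a)$ with the \emph{same} output colour $\overline a$. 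Your sentence conflates the input tuples $(c_{j,1},\ldots,c_{j,k_j})$ of the $T_j$ (which match the inputs of the vertices of $T$, making grafting possible) with the leaves of the resulting tree.
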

\begin{proof}
There is a distinguished element $1_{\overline{a}}$ in $S^C(\overline{a};\overline{a})$
corresponding to the tree
$$
\xymatrixcolsep{1pc}
\xymatrixrowsep{1pc}
\entrymodifiers={=<1pc>[o][F-]}
\xymatrix{
*{^1}\ar@{-}[drrr]|{a_1}b &  *{} & *{^2}\ar@{-}[dr]|{a_2} & *{^3}\ar@{-}[d]|{a_3}
& *{}\ar@{.}[dl] & *{}\ar@{.}[dll] & *{^m}\ar@{-}[dlll]|{a_m}\\
*{} & *{} & *{} & 1\ar@{-}[d]|-a & *{} & *{} & *{} & *{}\\
*{} & *{} & *{} & *{} & *{} & *{} & *{} & *{}
}
$$
for every $\overline{a}\in D$. These elements will be the
units of the coloured operad $S^C$.

The composition product on $S^C$ is defined as follows. Given an
element $(T,\sigma,\tau)$ of
$S^C(\overline{c}_1,\ldots,\overline{c}_n; \overline{a})$ and $n$
elements $(T_1,\sigma_1,\tau_1),\ldots,(T_n,\sigma_n,\tau_n)$ of
$$
S^C(\overline{d}_{1,1},\ldots,\overline{d}_{1,k_1}; \overline{c}_1),\ldots,
S^C(\overline{d}_{n,1},\ldots,\overline{d}_{n,k_n}; \overline{c}_n)
$$
respectively, we obtain an element $T'$ of
$$
S^C(\overline{d}_{1,1},\ldots,\overline{d}_{1,k_1},\overline{d}_{2,1},\ldots,\overline{d}_{2,k_2},\ldots,
\overline{d}_{n,1},\ldots,\overline{d}_{n,k_n} ; \overline{a})
$$
in the following way:
\begin{itemize}
\item[({\rm i})] $T'$ is obtained by replacing the vertex $\sigma(i)$ of $T$ by the tree $T_i$ for every~$i$.
This is done by identifying the input edges of $\sigma(i)$ in $T$
with the input edges if $T_i$ via the bijection $\tau_i$. The $c_{i,j}$-coloured input edge of $\sigma(i)$ is
matched with the $c_{i,j}$-coloured input edge $\tau_i(j)$ of $T_i$. (Note
that the colours of the input edges and the output of $\sigma(i)$ coincide with the colours of
the input edges and the root of $T_i$.)
\item[({\rm ii})] The vertices of $T'$ are numbered following the order, i.e., first number the subtree
$T_1$ in $T'$ ordered by $\sigma_1$, then $T_2$ ordered by $\sigma_2$ and so on.
\item[({\rm iii})] The input edges of $T'$ are numbered following $\tau$ and the identifications given by $\tau_i$.
\end{itemize}
This composition product is associative and compatible with the units and the action of the symmetric group. 
\end{proof}

\begin{example}
Let $C=\{a,b,c\}$ as before and let $T$ be an element of
$$
S^C((a,b;c),(c, b;a),(a,a,a;b);(c,b,a,a,a;c))
$$
represented by the tree
$$
\xymatrixcolsep{1pc}
\xymatrixrowsep{1pc}
\entrymodifiers={=<1pc>[o][F-]} \xymatrix{
*{^1}\ar@{-}[dr]|c & *{} & *{^2}\ar@{-}[dl]|b & *{} & *{^5}\ar@{-}[dr]|a & *{^3}\ar@{-}[d]|a& *{^4} \ar@{-}[dl]|a\\
*{} & 2 \ar@{-}[drr]|a & *{} & *{} & *{} & 3\ar@{-}[dll]|b & *{} \\
*{} & *{} & *{} & 1\ar@{-}[d]|c & *{} & *{} & *{} \\
*{} & *{} & *{} & *{} & *{} & *{} & *{}
}
$$
and $T_1$, $T_2$ and $T_3$ be elements of
$$
S^C((a,b;c),(c;c);(a,b;c)),\, S^C((b,b;a),(c;b);(c,b;a)),
$$
$$
\mbox{ and }
S^C((a,a;c),(a,c;b);(a,a,a;b))
$$
respectively, represented by the trees
$$
\begin{minipage}{4cm}
\xymatrixcolsep{1pc}
\xymatrixrowsep{1pc}
\entrymodifiers={=<1pc>[o][F-]} \xymatrix{
*{^1} \ar@{-}[drr]|a & *{} & *{} & *{} & *{^2}\ar@{-}[dll]|b \\
*{} & *{} & 1\ar@{-}[d]|c & *{} & *{} \\
*{} & *{} & 2\ar@{-}[d]|-c & *{} & *{} \\
*{} & *{} & *{} & *{} & *{}
}
\end{minipage}
\qquad\quad
\begin{minipage}{4cm}
\xymatrixcolsep{1pc}
\xymatrixrowsep{1pc}
\entrymodifiers={=<1pc>[o][F-]} \xymatrix{
*{^1}\ar@{-}[d]|-c & *{} & *{} & *{} & *{} \\
2 \ar@{-}[drr]|b & *{} & *{} & *{} & *{^2}\ar@{-}[dll]|b \\
*{} & *{} & 1\ar@{-}[d]|-a & *{} & *{} \\
*{} & *{} & *{} & *{} & *{}
}
\end{minipage}
\qquad\quad
\begin{minipage}{4cm}
\xymatrixcolsep{1pc}
\xymatrixrowsep{1pc}
\entrymodifiers={=<1pc>[o][F-]} \xymatrix{
*{} & *{} & *{} & *{^2}\ar@{-}[dr]|a &*{} & *{^1}\ar@{-}[dl]|a \\
*{^3} \ar@{-}[drr]|a & *{} & *{} & *{} & 1 \ar@{-}[dll]|c & *{} \\
*{} & *{} & 2\ar@{-}[d]|-b & *{} & *{} &*{} \\
*{} & *{} & *{} & *{} & *{} & *{}
}
\end{minipage}
$$
Applying the composition product, we get an element in
$$
S^C((a,b;c),(c;c),(b,b;a),(c;b),(a,a;c),(a,c;b);(c,b,a,a,a;c))
$$
that is represented by the following tree:
$$
\xymatrixcolsep{1pc}
\xymatrixrowsep{1pc}
\entrymodifiers={=<1pc>[o][F-]}
\xymatrix{
*{^1} \ar@{-}[d]|-c & *{} & *{} & *{} & *{} & *{^3}\ar@{-}[dr]|a & *{} & *{^5}\ar@{-}[dl]|a \\
4 \ar@{-}[dr]|b & *{} & *{^2}\ar@{-}[dl]|b & *{} & *{^4}\ar@{-}[dr]|a & *{} & 5\ar@{-}[dl]|c & *{} \\
*{} & 3 \ar@{-}[drr]|a & *{} & *{} & *{} & 6\ar@{-}[dll]|b & *{} & *{} \\
*{} & *{} & *{} & 1\ar@{-}[d]|-c & *{} & *{} & *{} & *{} \\
*{} & *{} & *{} & 2\ar@{-}[d]|-c & *{} & *{} & *{} & *{} \\
*{} & *{} & *{} & *{} & *{} & *{} & *{} & *{}
}
$$
\end{example}
\begin{proposition}
An algebra over $S^C$ is a $C$\nobreakdash-coloured operad in $\Sets$ and conversely.
\end{proposition}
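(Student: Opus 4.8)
The plan is to unwind what an $S^C$\nobreakdash-algebra in $\Sets$ is and to read off, from its structure maps, the units, symmetric group actions and composition products of a $C$\nobreakdash-coloured operad. Since the monoidal product of $\Sets$ is the cartesian product, an algebra $\mathbf{X}$ over $S^C$ amounts to a family of sets $\{X(\overline{a})\}_{\overline{a}\in D}$ together with a map
\[
\mu_{(T,\sigma,\tau)}\colon X(\overline{c}_1)\times\cdots\times X(\overline{c}_n)\longrightarrow X(\overline{a})
\]
for every class $(T,\sigma,\tau)\in S^C(\overline{c}_1,\ldots,\overline{c}_n;\overline{a})$, these maps being compatible with the symmetric group actions, with the units $1_{\overline{a}}$, and with the composition product of $S^C$. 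I would take the underlying collection of the associated $C$\nobreakdash-coloured operad to be $P(\overline{a})=X(\overline{a})$ and recover its three pieces of structure from three distinguished families of trees.

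The key step is to pin down these generating trees. First, for $n=0$ the only tree is the single edge coloured by a colour $c\in C$, which is an element of $S^C(\,;(c;c))$; its structure map $\ast\to X(c;c)$ provides the operad unit at $c$. Second, the single-vertex corollas give the symmetric group action: as observed just before the statement, for $\overline{a}=(a_1,\ldots,a_m;a)$ the set $S^C(\overline{a}_1;\overline{a})$ with $\overline{a}_1=(a_{\alpha(1)},\ldots,a_{\alpha(m)};a)$ is identified with the permutations carrying one colour sequence into the other, and the resulting maps $X(\overline{a}_1)\to X(\overline{a})$ reproduce the $\Sigma_m$\nobreakdash-action making $P$ a $C$\nobreakdash-coloured collection. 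Third, the two-vertex trees give the composition: grafting a vertex of profile $(a_1,\ldots,a_m;c_i)$ onto the $i$\nobreakdash-th leaf of a vertex of profile $(c_1,\ldots,c_n;c)$ yields a class whose structure map is the $\circ_i$\nobreakdash-operation of $P$. Conversely, starting from a $C$\nobreakdash-coloured operad $P$, I would define $\mu_{(T,\sigma,\tau)}$ by composition along the tree: place the given element at the vertex $\sigma(i)$ according to its profile and contract the tree using the $\circ_i$\nobreakdash-operations of $P$; the associativity and equivariance of $P$ ensure that the outcome is independent both of the order of contraction and of the chosen representative of the class, so that $\mu$ is well defined, and on the distinguished trees it returns the units, the action and the $\circ_i$\nobreakdash-operations.

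It then remains to check that the two passages are mutually inverse and that the axioms match up. Here the main point, and the main obstacle, is the tree bookkeeping: one must verify that the planar orderings together with the labellings $\sigma$ and $\tau$ translate precisely into the symmetric group equivariance of the composition product, and that the associativity constraint of the operad $S^C$, that is, the nested grafting of trees illustrated in the Example above, corresponds under this dictionary to the associativity relations of a $C$\nobreakdash-coloured operad. Once one observes that every class in $S^C$ is obtained from the distinguished generating trees (no-vertex, single-vertex and two-vertex) by iterated application of the composition product of $S^C$, the map $\mu$ is completely determined by its values on these generators; the compatibility of $\mu$ with the composition product of $S^C$ then becomes exactly the assertion that the corresponding operations in $P$ satisfy the operad axioms, and comparing the generating relations on both sides yields the desired equivalence, which moreover identifies maps of $S^C$\nobreakdash-algebras with morphisms of $C$\nobreakdash-coloured operads.
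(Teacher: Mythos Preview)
Your proposal is correct and follows essentially the same approach as the paper: identify the units from the zero-vertex edge in $S^C(\,;(c;c))$, the symmetric group action from the single-vertex corollas, and the $\circ_i$-operations from the distinguished two-vertex trees, and for the converse compose along the tree using the operad structure of $P$. Your final paragraph on the tree bookkeeping and the generation of $S^C$ by the distinguished trees makes explicit what the paper leaves implicit, but the underlying argument is the same.
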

\begin{proof}
Recall that the set of colours of $S^C$ is 
$$
D=\{(c_1,\ldots, c_n; c)\mid c_i, c\in C, \, n\ge 0\}.
$$
Thus, an $S^C$-algebra is given by a family of sets
$$
P=(P(c_1,\ldots, c_n;c))_{(c_1,\ldots, c_n;c)\in D}
$$
together with a map of $D$-coloured operads $S^C\longrightarrow \End(P)$, i.e., maps of sets
$$
\Phi\colon S^C(\overline{c}_1,\ldots,\overline{c}_n; \overline{a})\longrightarrow \Sets(P(\overline{c}_1)
\times\cdots\times P(\overline{c}_n), P(\overline{a})),
$$
where $\overline{c}_i=(c_{i,1},\ldots, c_{i,k_i}; c_i)$ and $\overline{a}=(a_1,\ldots, a_m;a)$.
In particular, the maps
$$
S^C(\,\,\, ; (c;c))\longrightarrow \Sets(*, P(c;c))
$$
give the units of $P$. If $\alpha\in \Sigma_n$, then the right action
$$
\alpha^*\colon P(c_1,\ldots, c_n;c)\longrightarrow P(c_{\alpha^{-1}(1)},\ldots, c_{\alpha^{-1}(n)};c)
$$
is defined by taking $\Phi(T)$, where $T$ is the tree
$$
\xymatrixcolsep{1pc}
\xymatrixrowsep{1pc}
\entrymodifiers={=<1pc>[o][F-]}
\xymatrix{
*{^{\alpha(1)}}\ar@{-}[drrr]|{c_1} &  *{} & *{^{\alpha(2)}}\ar@{-}[dr]|{c_2} & *{^{\alpha(3)}}\ar@{-}[d]|{c_3}
& *{}\ar@{.}[dl] & *{}\ar@{.}[dll] & *{^{\alpha(n)}}\ar@{-}[dlll]|{c_n}\\
*{} & *{} & *{} & 1\ar@{-}[d]|-c & *{} & *{} & *{} & *{}\\
*{} & *{} & *{} & *{} & *{} & *{} & *{} & *{}
}
$$
The $\circ_i$-operations
$$
\xymatrix{
P(c_1,\ldots,c_i,\ldots, c_n; c)\times P(a_1,\ldots, a_m; c_i) \ar[d]^{\circ_i} \\
P(c_1,\ldots, c_{i-1},a_1,\ldots, a_m,c_{i+1},\ldots, c_n;c)
}
$$
of $P$ are defined as the image under $\Phi$ of the element
$$
\xymatrixcolsep{1pc}
\xymatrixrowsep{1pc}
\entrymodifiers={=<1pc>[o][F-]}
\xymatrix{
*{^{i}}\ar@{-}[drrr]|{a_1} &  *{} & *{^{i+1}}\ar@{-}[dr]|{a_2} & *{^{i+2}}\ar@{-}[d]|{a_3}
& *{}\ar@{.}[dl] & *{}\ar@{.}[dll] & *{^{i+m-1}}\ar@{-}[dlll]|{a_m}\\
*{^1}\ar@{-}[drrr]|{c_1} &  *{}\ar@{.}[drr] & *{}\ar@{.}[dr] & 2\ar@{-}[d]|{c_i}
& *{}\ar@{.}[dl] & *{}\ar@{.}[dll] & *{^{n+m-1}}\ar@{-}[dlll]|{c_n}\\
*{} & *{} & *{} & 1\ar@{-}[d]|-c & *{} & *{} & *{} & *{}\\
*{} & *{} & *{} & *{} & *{} & *{} & *{} & *{}
}
$$
in
$
S^C((c_1,\ldots, c_n; c),(a_1,\ldots, a_m; c_i);
(c_1,\ldots,c_{i-1},a_1,\ldots, a_m,c_{i+1},\ldots, c_n;c)).
$

Conversely, given a $C$\nobreakdash-coloured operad $Q$, any triple
$(T,\sigma,\tau)$ contained in the set $S^C(\overline{c}_1,\ldots, \overline{c}_n;
\overline{a})$ acts on an element $(e_1,\ldots,e_n)\in
Q(\overline{c}_1)\times\cdots\times Q(\overline{c}_n)$ by labeling
the vertex $\sigma(i)$ of $T$ by $e_i$, and then using the coloured
operad structure of $Q$ to compose $(e_1,\ldots, e_n)$ along the
tree $T$ to obtain an element $e\in Q(\overline{a})$, and then
applying the right action by $\tau$ to this element. 
\end{proof}

\subsection{The coloured operad $S^C$ in monoidal categories}
\label{sc_mon}
The operad $S^C$ can be transported to any closed symmetric monoidal category $\V$ to obtain a coloured operad
in $\V$ whose algebras are $C$\nobreakdash-coloured operads in $\E$, where $\E$ is any monoidal $\V$-category.
More precisely, if $\mathcal{V}$ is any closed symmetric monoidal category,
then the strong symmetric monoidal functor $(-)_{\mathcal{V}}\colon
\Sets\longrightarrow \mathcal{V}$ defined as
$$
A_{\mathcal{V}}=\coprod_{x\in A}I
$$
for every set $A$ sends coloured operads to coloured operads. Hence, by applying
this functor to the coloured operad $S^C$ in $\Sets$, we obtain another coloured operad
$S^C_{\mathcal{V}}$ in $\mathcal{V}$ whose algebras in $\V$ are precisely
$C$\nobreakdash-coloured operads in $\mathcal{V}$.

If $\mathcal{E}$ is a monoidal $\V$-category, then coloured operads in $\mathcal{V}$ act on
$\mathcal{E}$. Thus, $S^C_{\mathcal{V}}$ is a coloured operad in
$\mathcal{V}$ whose algebras in $\mathcal{E}$ are $C$\nobreakdash-coloured
operads in~$\mathcal{E}$.

\begin{remark}
We will use $S^C_{\mathcal{V}}$ to obtain a model structure for 
coloured operads in symmetric spectra. There is an alternative
approach: the category of $C$\nobreakdash-coloured collections is strictly
monoidal \cite[Appendix]{BM07} and the monoids in this category are
the $C$\nobreakdash-coloured operads, i.e., a $C$\nobreakdash-coloured operad is an algebra over
the non-symmetric operad encoding monoid structures. We have chosen the
present way for two reasons. Firstly, we believe that the operad 
$S^C_{\mathcal{V}}$ is of interest in its own right. Secondly, we cannot directly use the 
results of Berger and Moerdijk \cite{BM03,BM07}, who only work in
\emph{symmetric} monoidal categories, since the
category of $C$\nobreakdash-coloured collections is \emph{not} symmetric monoidal.
\end{remark}

\section{Coloured operads with values in symmetric spectra}
We recall from \cite{BM07} the basic properties and terminology of a
model structure for the category of coloured operads in a symmetric
monoidal model category. We assume that our model categories have
functorial factorizations, as in \cite{Hov99} and \cite{Hir03}.

\subsection{Enriched model categories}
\label{model}
A \emph{monoidal model category} $\V$  is a closed symmetric monoidal category with a model structure satisfying the
following conditions:

\begin{itemize}
\item[(i)] \emph{Pushout-product axiom}. If $f\colon A\longrightarrow B$ and $g\colon U\longrightarrow V$ are two cofibrations in $\V$, then
the induced map
\begin{equation}
(A\otimes V)\coprod_{A\otimes U} (B\otimes U)\longrightarrow B\otimes V
\label{pushproduct}
\end{equation}
is a cofibration that is trivial if $f$ or $g$ is trivial. A direct consequence of the pushout-product
axiom is that tensoring with a cofibrant object preserves cofibrations (and trivial cofibrations), and that the tensor product
of two cofibrations with cofibrant domains is again a cofibration. 
\item[(ii)] \emph{Unit axiom}. The natural map
$$
q\otimes 1\colon QI\otimes A \longrightarrow I\otimes A
$$
is a weak equivalence in $\V$ for every cofibrant $A$, where $Q$
denotes the cofibrant replacement functor of $\V$. The unit axiom holds
trivially if the unit of $\V$ is cofibrant.
\end{itemize}

Let $\V$ be a monoidal model category. A \emph{model category enriched over $\V$} is a category $\E$ enriched over $\V$ with
a model structure such that the enrichment $\Hom_{\E}(-,-)$ satisfies an analog of Quillen's (SM7) axiom for
simplicial categories; namely, if $f\colon X\longrightarrow Y$ is a cofibration in $\E$ and $g\colon W\longrightarrow Z$ is
a fibration in $\E$, then the induced map
\begin{equation}
\Hom\nolimits_{\E}(Y,W)\longrightarrow\Hom\nolimits_{\E}(Y,Z)\times_{\Hom_{\E}(X,Z)}\Hom\nolimits_{\E}(X, W)
\label{enrichedcond}
\end{equation}
is a fibration in $\V$ that is trivial if $f$ or $g$ is trivial.

Recall from \cite[Definition 4.2.18]{Hov99} that a \emph{$\V$-module model category} is a $\V$-module category $\E$ with a model structure such that:
\begin{itemize}
\item[(i)] The functor $-\otimes-\colon \V\times \E\longrightarrow \E$ is a Quillen bifunctor, that is, the pushout\nobreakdash-product of a cofibration in $\V$ and a cofibration in $\E$ is a cofibration in $\E$. 
\item[(ii)] The map $q\otimes 1\colon QI\otimes X\longrightarrow I\otimes X$ is a weak equivalence in $\E$ for every cofibrant object $X$ in $\E$.
\end{itemize}

A \emph{monoidal $\V$-model category} is a $\V$-module model category $\E$ that is also a monoidal model category and such that the $\V$-action commutes with the monoidal product of $\E$ (see \cite[\S11.3.3, \S11.3.4]{Fre09}), i.e., there are natural coherent isomorphisms
$$
A\otimes (X\otimes Y)\cong (A\otimes X)\otimes Y
$$
for every $X$, $Y$ in $\E$ and every $A$ in $\V$. Any monoidal model category $\E$ is a monoidal $\E$-model category.

\subsection{Model structures for coloured operads}
For any cofibrantly generated monoidal model category $\V$, the
category of $C$\nobreakdash-coloured collections $\Coll_C(\V)$ admits a
cofibrantly generated  model structure in which a morphism
$K\longrightarrow L$ is a weak equivalence or a fibration if for
each $(n+1)$-tuple of colours $(c_1,\ldots, c_n;c)$, the map
$$
K(c_1,\ldots, c_n; c)\longrightarrow L(c_1,\ldots, c_n;c)
$$
is a weak equivalence or a fibration, respectively, in $\V$. This
model structure can be transferred along the free-forgetful
adjunction~(\ref{freeforgetful}),
$$
\xymatrix{
F: \Coll_C(\V) \ar@<3pt>[r] & \ar@<3pt>[l] {\rm Oper}_C(\V): U,
}
$$
to provide a model structure on the category of $C$\nobreakdash-coloured operads
in $\V$, if any sequential colimit of pushouts of images under $F$
of the generating trivial cofibrations of $\Coll_C(\V)$ yields a
weak equivalence in $\Coll_C(\V)$ after applying the forgetful
functor (cf.\ \cite[\S 3]{Cra95}, \cite[Theorem 11.3.2]{Hir03}). In
this model structure, a morphism of $C$\nobreakdash-coloured operads $f\colon
P\longrightarrow Q$ is a weak equivalence or a fibration if and only
if $U(f)$ is a weak equivalence or a fibration of
$C$\nobreakdash-coloured collections, respectively.

Although this assumption is usually hard to verify, it is satisfied if we assume that the unit of $\V$ is
cofibrant, the existence of a symmetric monoidal fibrant replacement functor for $\V$, and the existence of an interval
with a coassociative and cocommutative comultiplication \cite[Theorem 2.1]{BM07}. Recall that a \emph{symmetric monoidal fibrant replacement functor} in $\V$ is a fibrant replacement functor $F$ that is symmetric monoidal and such that for every $X$ and $Y$ in $\V$ the following diagram commutes:
$$
\xymatrix{
X\otimes Y\ar[r]^{r_{X\otimes Y}}\ar[d]_{r_X\otimes r_Y} & F(X\otimes Y) \\
FX\otimes F(Y),\ar[ur] &
}
$$ 
where $r\colon \rm{Id}_{\V}\longrightarrow F$ is the natural transformation associated with the fibrant replacement.

The categories of simplicial sets or
$k$-spaces (with the Quillen model structure), among others, satisfy the conditions above.

\subsection{Coloured operads in symmetric spectra}
We denote by $\Sp$ the category of symmetric spectra. When we refer to its model structure, we will
understand it as the \emph{positive model structure}, as described
in \cite{MMSS01} or in \cite{Shi04}. The weak equivalences are the
usual stable weak equivalences, and positive cofibrations are stable
cofibrations with the additional assumption that they are
isomorphisms in level zero. Positive fibrations are defined by the
right lifting property with respect to the trivial positive
cofibrations. With this model structure, the category of symmetric
spectra is a cofibrantly generated proper monoidal model category.

However, the assumptions required in \cite[Theorem 2.1]{BM07} are
not satisfied for this category. In the category of symmetric
spectra with the positive model structure, the unit spectrum $S$ is
not cofibrant, and the existence of a symmetric monoidal fibrant
replacement functor is not known. In fact, no symmetric monoidal model category of spectra can
simultaneously have a cofibrant unit and a symmetric monoidal
fibrant replacement functor~\cite{Lew91}. In this section we show how we can
avoid this problem by using the coloured operad described in
Section~\ref{colopasalg}.

Let $\E$ be a monoidal $\V$-model category and $P$ any $C$\nobreakdash-coloured operad in $\V$. Then
there is an adjoint pair
$$
\xymatrix{
F_P : \E^C \ar@<3pt>[r] & \ar@<3pt>[l] {\rm Alg}_P(\E) : U_P,
}
$$
where $F_P$ is the free $P$-algebra functor defined as
$$
F_P({\bf X})(c)=\coprod_{n\ge 0}\left(\coprod_{c_1,\dots, c_n\in C}P(c_1,\ldots, c_n;c)
\otimes_{\Sigma_n} X(c_1)\otimes\cdots\otimes X(c_n) \right)
$$
for every ${\bf X}=(X(c))_{c\in C}$ in $\E^{C}$, and $U_P$ is the
forgetful functor. Following the terminology of \cite{BM07}, we say that a $C$\nobreakdash-coloured
operad $P$ in $\V$ is \emph{admissible} in $\E$ if the model
structure on $\E^C$ is transferred to ${\rm Alg}_P(\E)$ along this
adjunction. Thus, if $P$ is admissible, then ${\rm Alg}_P(\E)$ has a
model structure where a map of $P$-algebras
$\mathbf{f}\colon\mathbf{X}\longrightarrow \mathbf{Y}$ is a weak
equivalence or a fibration if and only if
$$
f_c\colon X(c)\longrightarrow Y(c)
$$
is a weak equivalence or a fibration, respectively, in $\E$ for every
$c\in C$.

The category of symmetric spectra $\Sp$ with the positive model
structure is a monoidal $\sSets$-model category, where $\sSets$
denotes the category of simplicial sets with the usual model
structure. We recall the following admissibility result from
\cite[Theorem 1.3]{EM06}:
\begin{theorem}
If we consider the positive model structure in $\Sp$, then any coloured operad $P$ in simplicial sets is
admissible in $\Sp$. $\hfill\qed$
\label{e-m}
\end{theorem}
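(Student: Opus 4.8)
The plan is to produce the model structure on ${\rm Alg}_P(\Sp)$ by the standard transfer (Kan) criterion along the free-forgetful adjunction $F_P\dashv U_P$, declaring a map $\mathbf{f}$ of $P$-algebras to be a weak equivalence or fibration exactly when $U_P(\mathbf{f})$ is one in ${\Sp}^{C}$ (cf.\ \cite[Theorem 11.3.2]{Hir03}). To apply the criterion I must verify three things: that ${\rm Alg}_P(\Sp)$ is complete and cocomplete, that the small object argument is available, and the acyclicity condition that every sequential colimit of pushouts of maps $F_P(j)$, with $j$ a generating trivial cofibration of ${\Sp}^{C}$, becomes a stable equivalence after applying $U_P$.

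The first two are formal. A $P$-algebra is precisely an algebra for the monad $\mathbb{T}=U_P F_P$ on ${\Sp}^{C}$. Since each $P(c_1,\ldots,c_n;c)$ is a simplicial set and $\Sp$ is locally presentable, the functors $P(c_1,\ldots,c_n;c)\otimes_{\Sigma_n}(-)$ preserve filtered colimits; hence $\mathbb{T}$ is accessible, ${\rm Alg}_P(\Sp)$ is again locally presentable, and in particular complete and cocomplete. The same preservation shows that $U_P$ creates filtered colimits, so smallness of the (cofibrant) domains in ${\Sp}^{C}$ is inherited and the small object argument goes through.

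The heart of the matter is the acyclicity condition, and it suffices to treat a single pushout of a free map
$$
\xymatrix{
F_P(U)\ar[r]\ar[d] & A\ar[d] \\
F_P(V)\ar[r] & B
}
$$
with $j\colon U\To V$ a generating trivial cofibration of ${\Sp}^{C}$ and $A$ a $P$-algebra, the transfinite composites and retracts then following. I would use the standard operadic filtration of this pushout, writing $U_P(B)$ as the colimit of a sequence $U_P(A)=B_0\To B_1\To\cdots$ in which each stage is obtained by a pushout in ${\Sp}^{C}$ along a map of the form
$$
P_n\otimes_{\Sigma_n} Q^n_{\bullet}(j)\To P_n\otimes_{\Sigma_n} V^{\otimes n},
$$
where $Q^n_{\bullet}(j)$ is the $n$-th pushout-product cube of $j$ (the $\Sigma_n$-object assembled from smash powers of $U$ and $V$) and $P_n$ packages the arity-$n$ part of $P$ together with the $A$-decorations on the remaining inputs.

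The key lemma to establish is then that each transition $B_{n-1}\To B_n$ is a trivial cofibration in ${\Sp}^{C}$, and this is exactly where the \emph{positive} model structure is indispensable. Because $j$ is a positive trivial cofibration, the $\Sigma_n$-objects $U^{\otimes n}$, $V^{\otimes n}$ and the cube $Q^n_{\bullet}(j)$ are built from free $\Sigma_n$-cells in a range that makes the strict quotient $(-)\otimes_{\Sigma_n}(-)$ agree up to stable equivalence with the homotopy orbits; concretely one invokes that for positive cofibrant $X$ the canonical map $(E\Sigma_n)_+\otimes_{\Sigma_n} X^{\otimes n}\To X^{\otimes n}/\Sigma_n$ is a stable equivalence, and that homotopy orbits carry trivial cofibrations to trivial cofibrations. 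I expect this preservation of trivial cofibrations under the strict $\Sigma_n$-coinvariants to be the main obstacle: the $\Sigma_n$-action on smash powers is \emph{not} free, so in the absolute stable structure the strict orbits $X^{\otimes n}/\Sigma_n$ would fail to compute the homotopy orbits and acyclicity would break; it is precisely positivity—whose deliberately non-cofibrant unit forces the generating cofibrations into positive levels—that repairs this. Granting the lemma, the transition maps are trivial cofibrations, hence so is the transfinite composite $U_P(A)\To U_P(B)$, the acyclicity condition holds, and the transfer criterion yields the desired model structure, i.e.\ $P$ is admissible in $\Sp$ (cf.\ \cite{EM06}).
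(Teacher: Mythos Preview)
The paper does not prove this theorem: it is recalled from \cite[Theorem~1.3]{EM06} and stated with a terminal $\qed$. The remark following Corollary~\ref{maincor} does indicate, consistently with your outline, that admissibility reduces to showing that relative $F_P(\mathcal{J})$-cell complexes are stable equivalences, and points to \cite[Lemma~11.7]{EM06} for this.

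Your outline is a correct sketch of the transfer argument, and you have located the right crux: acyclicity of the transition maps in the operadic pushout filtration, resting on the positive model structure making strict $\Sigma_n$-orbits of smash powers compute homotopy orbits. But you explicitly stop at ``Granting the lemma'', so what you have is an outline rather than a proof. The step you grant is genuinely the hard part: one needs the result (Shipley \cite{Shi04}, refined in \cite{EM06}) that $(E\Sigma_n)_+\otimes_{\Sigma_n}X^{\otimes n}\to X^{\otimes n}/\Sigma_n$ is a stable equivalence for positive cofibrant $X$, together with a careful cell-by-cell analysis of the iterated pushout-products against the $A$-decorated terms to check that each transition map is actually a positive trivial cofibration (or else an appeal to the monoid axiom to propagate acyclicity through the pushouts). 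That analysis is where the real work in \cite{EM06} lies, and your sketch does not supply it.
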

Using the coloured operad of Section 3, we obtain a model structure
for coloured operads in symmetric spectra.
\begin{corollary}
Let $C$ be a fixed set of colours. Then the category of $C$\nobreakdash-coloured operads in symmetric spectra admits a model
structure in which the weak equivalences are the colourwise stable equivalences and the fibrations are the
colourwise positive stable fibrations of symmetric spectra.
\label{maincor}
\end{corollary}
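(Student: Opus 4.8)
The plan is to specialize the construction of Section~\ref{colopasalg} to the enriching category $\V=\sSets$ and to the monoidal $\V$-category $\E=\Sp$, and then to invoke Theorem~\ref{e-m}. First I would apply the transport of Section~\ref{sc_mon} with $\V=\sSets$ to the set-valued operad $S^C$, producing a coloured operad $S^C_{\sSets}$ in simplicial sets with colour set $D=\{(c_1,\ldots,c_n;c)\mid c_i,c\in C,\ n\ge 0\}$. Since $\Sp$ with the positive model structure is a monoidal $\sSets$-model category, the results of Section~\ref{colopasalg} identify the algebras of $S^C_{\sSets}$ in $\Sp$ with the $C$-coloured operads in $\Sp$; that is, ${\rm Alg}_{S^C_{\sSets}}(\Sp)\cong\Oper_C(\Sp)$ as categories.

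Next, since $S^C_{\sSets}$ is a coloured operad in simplicial sets, Theorem~\ref{e-m} shows that it is admissible in $\Sp$. Consequently the product model structure on $\Sp^D$ transfers along the free-forgetful adjunction $F_{S^C}\dashv U_{S^C}$ to a model structure on ${\rm Alg}_{S^C_{\sSets}}(\Sp)$, which through the isomorphism above I would transport to $\Oper_C(\Sp)$, thereby equipping the category of $C$-coloured operads in symmetric spectra with a model structure.

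It then remains to identify the weak equivalences and fibrations. By the definition of the transferred structure, a map of $S^C_{\sSets}$-algebras is a weak equivalence (respectively a fibration) precisely when it is one colourwise over $D$, i.e. componentwise in $\Sp$ at every $\overline{c}\in D$. The component at $\overline{c}=(c_1,\ldots,c_n;c)$ of the underlying object of such an algebra is exactly the value $P(c_1,\ldots,c_n;c)$ of the corresponding operad, so this condition says that $P(c_1,\ldots,c_n;c)\longrightarrow Q(c_1,\ldots,c_n;c)$ is a stable equivalence (respectively a positive stable fibration) for every tuple. This is exactly a colourwise stable equivalence (respectively colourwise positive stable fibration) of the underlying $C$-coloured collections, as required.

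Almost all of the content has been established beforehand --- the identification of coloured operads with $S^C$-algebras and the admissibility statement --- so the remaining verification is a direct unwinding of definitions. The one point I would be careful about is that Theorem~\ref{e-m} must genuinely apply to $S^C_{\sSets}$, whose colour set $D$ is infinite even when $C$ is finite; hence I would check that the cited admissibility result is valid for coloured operads in simplicial sets over an arbitrary (in particular infinite) colour set, so that the transfer along $F_{S^C}\dashv U_{S^C}$ is legitimate here. Granting this, the corollary follows immediately.
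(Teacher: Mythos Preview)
Your proposal is correct and follows essentially the same approach as the paper: identify $\Oper_C(\Sp)$ with ${\rm Alg}_{S^C_{\sSets}}(\Sp)$ via Section~\ref{colopasalg} and then apply Theorem~\ref{e-m}. Your additional unwinding of the weak equivalences and fibrations, and your remark about the colour set $D$ being infinite, are reasonable elaborations but not needed---Theorem~\ref{e-m} is stated for \emph{any} coloured operad in simplicial sets, with no finiteness hypothesis on the colour set.
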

\begin{proof}
Consider the coloured operad $S^C_{\V}$ of Section 3, where $\V$ is now the category of simplicial sets.
The category of $C$\nobreakdash-coloured operads in $\Sp$ is the category of $S^C_{\V}$-algebras in $\Sp$, and the latter has a model structure by Theorem~\ref{e-m}.
\end{proof}

\begin{remark}
Recall that if $\E$ is a cocomplete category and $\mathcal{I}$ is a set of maps in $\E$, then
the subcategory of relative $\mathcal{I}$-cell complexes is the subcategory of maps that can be
constructed as transfinite compositions of pushouts of elements of $\mathcal{I}$. The admissibility of
every coloured operad with values in simplicial sets in the category of symmetric spectra is based
on the fact that every relative $F_P(\mathcal{J})$-complex is a stable equivalence, where $\mathcal{J}$ is the set
of generating trivial cofibrations of $\E^C$ with $\E=\Sp$ (see \cite[Lemma 11.7]{EM06}). In fact, Theorem~\ref{e-m} and its
corollary remain valid for any cofibrantly generated monoidal $\V$-category $\E$ with this property.
\end{remark}

\begin{remark}
In a recent paper \cite{Kro}, Kro proved that the category of
reduced operads and the category of positive operads in orthogonal
spectra with the positive model structure admits a transferred model
structure, by constructing a symmetric monoidal fibrant replacement
functor. Recall that an operad $P$ in a monoidal category $\V$ is
\emph{positive} if $P(0)=0$, and it is \emph{reduced} if $P(0)=I$,
where $I$ is the unit of $\V$.

In fact, he extended the results of \cite{BM03} to reduced and
positive operads in a monoidal $\V$-model category $\E$
where the unit is not necessarily cofibrant (although the symmetric
monoidal fibrant replacement functor assumption is still needed),
but $\V$ has a nice unit and Hopf intervals (see \cite[Theorem
2.4]{Kro} for an explicit statement).
Observe that we cannot apply the result of Kro in our case, since we
do not have a symmetric monoidal fibrant replacement functor, and
his candidate for orthogonal spectra is not valid for symmetric
spectra (see \cite[Remark 3.4]{Kro}).
\end{remark}

\section{Localization of module structures}

\subsection{Enriched homotopical localization}
Let $\E$ be a $\V$-module model category as in Section~\ref{model}. For all $X$ and $Y$ in $\E$, we define
$$
\hom_{\E}(X,Y)=\Hom\nolimits_{\E}(QX, FY),
$$
where $Q$ denotes a functorial cofibrant replacement and $F$ denotes a functorial fibrant replacement. We call the object
$\hom_{\E}(-,-)$ a \emph{homotopy $\V$-function complex}. Note that $\hom_{\E}(-,-)$ is always fibrant in $\V$
by~(\ref{enrichedcond}). A morphism $X\longrightarrow Y$ and an object $Z$ in $\E$ are called \emph{$\V$-orthogonal} if
the induced map
$$
\hom_{\E}(Y,Z)\longrightarrow \hom_{\E}(X,Z)
$$
is a weak equivalence in $\V$.

The following definition extends~\cite[Definition 4.1]{CGMV}.
\begin{definition}
An \emph{enriched homotopical localization} in a $\V$-module model category $\E$ is a functor $L\colon\E\longrightarrow \E$ that
preserves weak equivalences and takes fibrant values, together with a natural transformation
$\eta\colon{\rm Id}_{\E}\longrightarrow L$ such that, for every $X$ in $\E$, the following hold:
\begin{itemize}
\item[{\rm (i)}] $L\eta_X\colon LX\longrightarrow LLX$ is a weak equivalence in $\E$.
\item[{\rm (ii)}] $\eta_{LX}$ and $L\eta_X$ are equal in the homotopy category ${\rm Ho}(\E)$.
\item[{\rm (iii)}] For every $X$ in $\E$, the map $\eta_X\colon X\longrightarrow LX$ is a cofibration such that the induced
map
$$
\hom_{\E}(LX,LY)\longrightarrow \hom_{\E}(X,LY)
$$
is a weak equivalence in $\V$ for all $Y$ in $\E$.
\end{itemize}
\end{definition}

If $L$ is an enriched homotopical localization, then the fibrant objects of $\E$ weakly equivalent to $LX$ for some $X$
are called \emph{$L$-local}, and the maps $f$ such that $Lf$ is a weak equivalence are called \emph{$L$-equivalences}.
It follows using~(\ref{enrichedcond}) and Ken Brown's lemma \cite[Lemma 1.1.12]{Hov99} that $L$-local objects and $L$\nobreakdash-equivalences are $\V$-orthogonal. In fact, a fibrant object is $L$-local if and only if it
is $\V$-orthogonal to all $L$-equivalences, and a map is an $L$-equivalence if and only if it is $\V$-orthogonal to all
$L$-local objects.

The main source for enriched homotopical localizations comes from
enriched left Bousfield localizations~\cite{Bar10}. Enriched left
Bousfield localizations are similar to Bousfield localizations, but
defining the class of local objects by means of
$\V$-orthogonality instead of simplicial orthogonality.
As proved in \cite[Theorem~4.46]{Bar10}, the enriched left Bousfield
localization with respect to a set of morphisms always exists in a
$\V$-model category $\E$, provided that the category $\E$
is cotensored over $\V$, left proper and combinatorial, and the category $\V$ is
combinatorial. This is the case, for example,  of the category of symmetric spectra enriched over itself or 
enriched over simplicial sets.

The following theorem is our main result on the preservation of algebras over operads in monoidal $\V$-categories. For simplicity, we state it for
coloured operads with only one colour, but it can be generalized to $C$\nobreakdash-coloured operads by using ideals on the
set of colours $C$; cf. \cite[Theorem 6.1]{CGMV}.

Given two $P$-algebra structures $\gamma$ and $\gamma'\colon P\longrightarrow\End(\mathbf{\mathbf{X}})$ on an object $\mathbf{X}$ of $\E^C$, we say that $\gamma$ and $\gamma'$ \emph{coincide up to homotopy} if they are equal in the homotopy category of $C$-coloured operads.

\begin{theorem}
Let $(L,\eta)$ be an enriched homotopical localization on a mo\-noi\-dal $\V$-model category $\E$ such that the
category of operads in $\V$ admits a transferred model structure.
Let $P$ be a cofibrant operad in $\V$ and let $X$ be a $P$-algebra such that
$X$ is cofibrant in $\E$. Suppose that, for every $n\ge 1$, the map
\begin{equation}
\eta_X^{\otimes n}\colon X\otimes\stackrel{(n)}{\cdots}\otimes X\longrightarrow LX\otimes\stackrel{(n)}{\cdots}\otimes LX
\label{displayed_cond}
\end{equation}
is an $L$-equivalence. Then $LX$ admits a homotopy
unique $P$-algebra structure such that $\eta_X$ is a map of $P$-algebras.
\label{mainthm}
\end{theorem}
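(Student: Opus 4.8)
The plan is to use the operad $\widetilde{P} = P \otimes I_{\E}$ and transport the $P$-algebra structure on $X$ into a statement about maps of operads, then exploit the $\V$-orthogonality characterization of $L$-local objects and $L$-equivalences that was established right after the definition of enriched homotopical localization. The key observation is that a $P$-algebra structure on an object $Y$ of $\E$ is the same thing as a morphism of operads $P \longrightarrow \End(Y)$ in $\V$, and by Lemma~\ref{strong_monoidal} this is equivalent to a morphism $\widetilde{P} \longrightarrow \End(Y)$ of operads in $\E$, where $\widetilde{P}(n) = P(n) \otimes I_{\E}$. So the goal becomes: produce a morphism $\widetilde{P} \longrightarrow \End(LX)$ making $\eta_X$ into a map of algebras, and show it is homotopy unique.

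First I would observe that since $P$ is cofibrant and the operads in $\V$ admit a transferred model structure, the endomorphism operads $\End(X)$ and $\End(LX)$, together with the structure map from $P$, give us objects we can manipulate up to homotopy. The heart of the argument is to produce a lift: given the $P$-algebra structure $\gamma\colon P \longrightarrow \End(X)$ and the localization map $\eta_X\colon X \longrightarrow LX$, I want a structure map $\gamma_L\colon P \longrightarrow \End(LX)$ compatible with $\eta_X$, meaning the two composites $P \To \End(X) \To \Hom(X,LX)$ and $P \To \End(LX) \To \Hom(X,LX)$ agree. The natural candidate comes from the map of collections $\End(X) \To \Hom(X,LX)$ induced by $\eta_X$. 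The crucial point is that, level by level, the $n$-th term of $\End(LX) \To \Hom(X,LX)$ is the map
\[
\Hom\nolimits_{\E}(LX^{\otimes n}, LX) \longrightarrow \Hom\nolimits_{\E}(X^{\otimes n}, LX)
\]
induced by precomposition with $\eta_X^{\otimes n}$, and \textbf{this is exactly where hypothesis~(\ref{displayed_cond}) enters}: since $\eta_X^{\otimes n}$ is an $L$-equivalence and $LX$ is $L$-local (it is fibrant and a value of $L$), the induced map on homotopy function complexes $\hom_{\E}(LX^{\otimes n}, LX) \To \hom_{\E}(X^{\otimes n}, LX)$ is a weak equivalence in $\V$ by $\V$-orthogonality. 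With $X$ cofibrant (so each $X^{\otimes n}$ is cofibrant, using that tensoring with a cofibrant object preserves cofibrations) and $LX$ fibrant, $\hom_{\E}$ computes the derived mapping space, so this map is a weak equivalence of fibrant objects of $\V$.

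The upshot is that the map of collections $\End(LX) \To \Hom(X,LX)$ is a levelwise weak equivalence of collections between fibrant objects, hence (after cofibrant replacement of $P$, which is already cofibrant) the induced map on homotopy classes of operad maps $[P, \End(LX)] \To [P, \Hom(X,LX)]$ is a bijection; I would phrase this by forming the operad $\End(\eta_X)$ as the pullback from Section~2 and checking it is weakly equivalent to $\End(LX)$. Concretely, one wants to lift the outer square, which reduces to a lifting problem for the cofibrant operad $P$ against a trivial fibration (or levelwise weak equivalence) of operads, and the transferred model structure on operads in $\V$ is what makes this lift exist and be unique up to homotopy. \textbf{The main obstacle} will be passing from the levelwise weak equivalence of underlying collections to the existence and homotopy uniqueness of a lift \emph{in the category of operads}: one must either invoke that fibrations and weak equivalences of operads are detected on underlying collections in the transferred structure (so a levelwise weak equivalence of fibrant operads behaves like a weak equivalence of fibrant operads, and $P$ cofibrant gives the bijection on homotopy classes of maps), or argue directly with the pullback operad $\End(\eta_X)$. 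I would then conclude existence of $\gamma_L$ from surjectivity and homotopy uniqueness from injectivity of $[P,-]$ applied to this weak equivalence, which is precisely the assertion that $LX$ carries a homotopy unique $P$-algebra structure compatible with $\eta_X$.
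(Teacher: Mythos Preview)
Your overall strategy is exactly the paper's: form the pullback operad $\End(\eta_X)$, lift the given structure $P\to\End(X)$ through it using cofibrancy of $P$, and project to $\End(LX)$. The detour through $\widetilde P=P\otimes I_\E$ is unnecessary, since $\End(X)$ is already an operad in $\V$ and the given algebra structure is already a map $P\to\End(X)$ there; but it does no harm.

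There are, however, two concrete points where your argument needs sharpening to match the paper's.

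First, you only argue that $\Hom_\E(LX^{\otimes n},LX)\to\Hom_\E(X^{\otimes n},LX)$ is a weak equivalence. The paper observes more: $\eta_X$ is a \emph{cofibration} by part~(iii) of the definition of enriched homotopical localization, so $\eta_X^{\otimes n}$ is a cofibration between cofibrant objects (pushout-product axiom), and since $LX$ is fibrant the SM7-type axiom~(\ref{enrichedcond}) makes the displayed map a \emph{trivial fibration}. This upgrade from ``weak equivalence'' to ``trivial fibration'' is what drives the whole lifting argument cleanly; without it you are left hedging between ``trivial fibration (or levelwise weak equivalence)''.

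Second, you aim at the wrong projection out of the pullback. You write that one should check $\End(\eta_X)$ is weakly equivalent to $\End(LX)$, but there is no reason for $\End(X)\to\Hom(X,LX)$ to be a weak equivalence, so the pullback projection $\alpha\colon\End(\eta_X)\to\End(LX)$ need not be one. What \emph{is} true, once you know $\End(LX)\to\Hom(X,LX)$ is a trivial fibration, is that its pullback $\beta\colon\End(\eta_X)\to\End(X)$ is a trivial fibration of operads (fibrations and weak equivalences are detected on underlying collections in the transferred model structure, and both are stable under pullback). Now the given structure $P\to\End(X)$ lifts along $\beta$ since $P$ is cofibrant, and composing with $\alpha$ gives the $P$-algebra structure on $LX$. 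Uniqueness is then immediate: any two structures on $LX$ making $\eta_X$ a $P$-algebra map factor as $\alpha\circ\delta$, $\alpha\circ\delta'$ through $\End(\eta_X)$ with $\beta\circ\delta=\beta\circ\delta'$ equal to the fixed structure on $X$; since $\beta$ is a trivial fibration and $P$ is cofibrant, $\delta$ and $\delta'$ agree in the homotopy category, hence so do $\alpha\circ\delta$ and $\alpha\circ\delta'$. Note also that $\Hom(X,LX)$ is only a collection, not an operad, so your line about $[P,\Hom(X,LX)]$ in the category of operads does not parse; the pullback $\End(\eta_X)$ is precisely the device that circumvents this.
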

\begin{proof}
The map $\eta_X^{\otimes n}$ is a cofibration for every $n\ge 1$, since it is a tensor product of $n$ cofibrations with cofibrant
domains in a monoidal model category. Hence the induced map
$$
\xymatrix{
\Hom_{\E}(LX\otimes\stackrel{(n)}{\cdots}\otimes LX, LX)\ar[d] \\
\Hom_{\E}(X\otimes\stackrel{(n)}{\cdots}\otimes X, LX)
}
$$
is a trivial fibration for every $n\ge 1$. Indeed, it is a weak equivalence because $LX$ is $L$-local and $\eta_X^{\otimes n}$ is an $L$-equivalence
by assumption, and it is also a fibration by~(\ref{enrichedcond}). Thus, the morphism of collections
$$
\End(LX)\longrightarrow \Hom(X, LX)
$$
induced by $\eta_X$ is a trivial fibration.

Now consider the endomorphism operad ${\rm End}_P(\eta_X)$ associated to $\eta_X$. It is obtained as the following
pullback of collections:
\begin{equation}
\label{mainpullback}
\xymatrix{
{\rm End}(\eta_X) \ar@{.>}[r]^{\alpha} \ar@{.>}[d]_{\beta} & {\rm End}(LX) \ar[d] \\
{\rm End}(X) \ar[r] & {\rm Hom}(X, LX).
}
\end{equation}
The operad $P$ is cofibrant by hypothesis, and the map $\beta$ is a trivial fibration since it is a pullback of a trivial
fibration. Therefore, there is a lifting
$$
\xymatrix{
& {\rm End}(\eta_X) \ar[d] \\
P \ar[r] \ar@{.>}[ur] & {\rm End}(X)
}
$$
where $P\longrightarrow {\rm End}(X)$ is the given $P$-algebra structure of $X$. The
$P$-algebra structure on $LX$ is obtained by composing this lifting with the upper morphism
$\alpha$ in~(\ref{mainpullback}), and with this structure $\eta_X$ is a map of $P$-algebras.

To prove uniqueness, suppose that we have two $P$-algebra structures on $LX$, denoted by
$\gamma,\gamma'\colon P\longrightarrow{\rm End}(LX)$, and assume that $\eta_X$
is a map of $P$\nobreakdash-algebras for each
of them, i.e., $\gamma$ and $\gamma'$ factor through ${\rm End}(\eta_X)$.
Now let $\delta,\delta'\colon P\longrightarrow {\rm End}(\eta_X)$ be such that
$\gamma=\alpha\circ\delta$ and $\gamma'=\alpha\circ\delta'$, with $\alpha$ as in~(\ref{mainpullback}).
Since $\beta\circ\delta=\beta\circ\delta'$ and $\beta$ is a trivial fibration, it follows that
$\delta$ and $\delta'$ are equal in the homotopy category of operads and  hence
so are $\gamma$ and $\gamma'$.
\end{proof}

\begin{remark}
The displayed condition~(\ref{displayed_cond}) holds automatically when $\V=\E$, that is, when $\E$ is a monoidal model category and the localization is enriched in $\E$. 
\end{remark}

\subsection{Localization of module spectra}
Let $\E$ be a monoidal $\V$-model category. Recall  that the operad $\mathcal{A}ss$ in $\V$ whose algebras in $\E$ are the associative monoids is defined by 
$\mathcal{A}ss(n)=I[\Sigma_n]$ for $n\ge 0$, where $I[\Sigma_n]$ denotes a coproduct of copies of the unit $I$ indexed by $\Sigma_n$ on which 
$\Sigma_n$ acts freely by permutations. Suppose that the operad $\mathcal{A}ss$ and the coloured operad $S_{\V}^C$ in $\V$, with $C$ a set with one element, are admissible in $\E$.
Then, the category ${\rm Mon}(\E)$ of monoids in $\E$ and the category $\Oper(\E)$ of (one-coloured) operads in
$\E$ have a transferred model structure, since they are categories of algebras over ${\mathcal A}ss$ and $S_{\V}^C$,
respectively. These two categories are related by a pair of adjoint functors
$$
\xymatrix{
P_{(-)} : {\rm Mon}(\E)\ar@<3pt>[r] & \ar@<3pt>[l]  \Oper(\E) : (-)(1),
}
$$
where the left adjoint sends a monoid $R$ to the operad $P_R$ defined by
$$
P_R(n)=
\left\{
\begin{array}{ll}
R & \mbox{if $n=1$,}\\
0 & \mbox{otherwise,}
\end{array}
\right.
$$
and the right adjoint sends any operad $Q$ to the monoid $Q(1)$. In fact, this adjoint pair is a Quillen pair, since
the right adjoint preserves fibrations and weak equivalences.

The following result is an application of Theorem~\ref{mainthm} to the preservation of module structures under enriched homotopical
localizations.

\begin{theorem}
Let $(L,\eta)$ be an enriched homotopical localization on a mo\-noi\-dal model category $\E$ such that
the category of operads in $\E$ and the category of monoids in $\E$ admit a transferred model structure.
Let $R$ be a cofibrant monoid in $\E$ and let $X$ be an $R$-module such that
$X$ is cofibrant in $\E$. Then $LX$ admits a homotopy
unique $R$-module structure such that $\eta_X$ is a map of $R$-modules.
\label{locmodules}
\end{theorem}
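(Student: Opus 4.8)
The strategy is to deduce Theorem~\ref{locmodules} from Theorem~\ref{mainthm} by recognizing $R$-modules as algebras over the operad $P_R$ and verifying that the hypotheses of Theorem~\ref{mainthm} are met in this special case. First I would recall that, under the adjunction
$$
\xymatrix{
P_{(-)} : {\rm Mon}(\E)\ar@<3pt>[r] & \ar@<3pt>[l]  \Oper(\E) : (-)(1),
}
$$
the category of $R$-modules is precisely ${\rm Alg}_{P_R}(\E)$. Since the category of operads in $\E$ admits a transferred model structure by hypothesis (this is where admissibility of $S^C_{\V}$ enters, via Corollary~\ref{maincor} and the surrounding discussion), Theorem~\ref{mainthm} applies once we know that $P_R$ is a cofibrant operad and that the displayed condition~(\ref{displayed_cond}) holds.

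Next I would establish the cofibrancy of $P_R$. The key observation is that $P_{(-)}$ is a left Quillen functor (its right adjoint $(-)(1)$ preserves fibrations and weak equivalences, as noted just before the statement), so it preserves cofibrant objects. Since $R$ is a cofibrant monoid by hypothesis, $P_R$ is therefore a cofibrant operad in $\E$, exactly as required by Theorem~\ref{mainthm}. This is the conceptual heart of why working with the spectrum-valued operad $P_R$ simplifies matters: cofibrancy of the operad is handed to us directly by a Quillen adjunction, rather than having to be checked by hand.

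It then remains to verify condition~(\ref{displayed_cond}). Here I would invoke the Remark immediately following the proof of Theorem~\ref{mainthm}: when $\V=\E$, that is, when $\E$ is a monoidal model category and the localization is enriched in $\E$ itself, the condition that each
$$
\eta_X^{\otimes n}\colon X\otimes\stackrel{(n)}{\cdots}\otimes X\longrightarrow LX\otimes\stackrel{(n)}{\cdots}\otimes LX
$$
be an $L$-equivalence holds automatically. Since the present theorem is stated for a monoidal model category $\E$ with the localization enriched in $\E$, we are exactly in this situation, and~(\ref{displayed_cond}) is satisfied for free.

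With all hypotheses of Theorem~\ref{mainthm} in place---$P_R$ cofibrant, $X$ an $R$-module (i.e.\ a $P_R$-algebra) cofibrant in $\E$, and~(\ref{displayed_cond}) holding---we conclude that $LX$ carries a homotopy unique $P_R$-algebra structure making $\eta_X$ a map of $P_R$-algebras. Translating back along the identification of $P_R$-algebras with $R$-modules, this says precisely that $LX$ has a homotopy unique $R$-module structure for which $\eta_X$ is a map of $R$-modules. The main obstacle, such as it is, lies not in any single calculation but in correctly matching the abstract hypotheses of Theorem~\ref{mainthm} to the module setting---specifically in confirming that the transferred model structure on operads (needed in Theorem~\ref{mainthm}) is the one provided here and that the Quillen adjunction genuinely yields cofibrancy of $P_R$; both are immediate from the setup but must be stated to make the reduction airtight.
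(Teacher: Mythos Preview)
Your proposal is correct and follows essentially the same route as the paper: identify $R$-modules with $P_R$-algebras, use that $P_{(-)}$ is left Quillen to get cofibrancy of $P_R$, take $\V=\E$ so that Theorem~\ref{mainthm} applies, and conclude. The only cosmetic difference is that the paper dispatches condition~(\ref{displayed_cond}) by observing that $P_R$ is concentrated in valence one (so only the trivial case $n=1$ is needed), whereas you invoke the Remark that~(\ref{displayed_cond}) is automatic when $\V=\E$; both justifications are valid here and neither changes the argument. Your parenthetical about Corollary~\ref{maincor} is slightly off---that corollary is specific to symmetric spectra, while in Theorem~\ref{locmodules} the transferred model structure on $\Oper(\E)$ is simply a hypothesis---but this does not affect the proof.
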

\begin{proof}
If $R$ is a monoid in $\E$, then an $R$-module is the same as an
algebra over the operad $P_R$. Note that, if $R$ is a cofibrant
object in ${\rm Mon}(\E)$, then the operad $P_R$ is also cofibrant
in ${\Oper}(\E)$, since $P_{(-)}$ is a left Quillen functor. The
result now follows from Theorem~\ref{mainthm}, since the operad
$P_R$ is concentrated in valence one and considering $\E$ itself as a monoidal $\E$-category.
\end{proof}

Now we make this result explicit for module spectra in the category of symmetric spectra $\Sp$ with the positive model structure.  We will consider enriched homotopical localizations in $\Sp$ viewed as a monoidal $\Sp$-category, i.e., using the enrichment given by the internal hom.

\begin{corollary}
Let $(L,\eta)$ be an enriched homotopical localization on the category of symmetric spectra (enriched over itself). 
Let $M$ be a module over a cofibrant ring symmetric spectrum $R$ and assume that $M$ is cofibrant as a
symmetric spectrum. Then $LM$ has a homotopy unique module structure over $R$ such that $\eta_M\colon M\longrightarrow LM$ is
a morphism of $R$-modules.
\end{corollary}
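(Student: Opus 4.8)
The plan is to obtain this corollary as the instance $\E=\Sp$ of Theorem~\ref{locmodules}, where $\Sp$ carries the positive model structure and is regarded as a monoidal $\Sp$\nobreakdash-model category (that is, enriched over itself through its internal hom). Recall that with the positive model structure $\Sp$ is a cofibrantly generated proper monoidal model category, so the standing hypothesis of Theorem~\ref{locmodules} is met. Moreover, since $L$ is enriched in $\Sp$ itself we are in the case $\V=\E$, for which the tensor\nobreakdash-power condition~(\ref{displayed_cond}) of Theorem~\ref{mainthm} is automatic; thus nothing beyond the two transferred\nobreakdash-model\nobreakdash-structure hypotheses of Theorem~\ref{locmodules} needs to be checked.

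The first of these hypotheses, that the category of operads in $\Sp$ carries a transferred model structure, is exactly Corollary~\ref{maincor} with the set of colours $C$ taken to be a singleton: one\nobreakdash-coloured operads in $\Sp$ are the algebras over the simplicial\nobreakdash-set\nobreakdash-valued operad $S^C_{\V}$, and the transferred structure is produced by the admissibility Theorem~\ref{e-m}.

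For the second hypothesis, that the category ${\rm Mon}(\Sp)$ of ring symmetric spectra carries a transferred model structure, I would realise monoids as algebras over the associative operad $\mathcal{A}ss$ taken in simplicial sets, with $\mathcal{A}ss(n)=\Sigma_n$ a discrete simplicial set on which $\Sigma_n$ acts freely. Its algebras in $\Sp$ are precisely the monoids, and by Theorem~\ref{e-m} every simplicial\nobreakdash-set\nobreakdash-valued operad, $\mathcal{A}ss$ in particular, is admissible in $\Sp$. Hence ${\rm Mon}(\Sp)$ acquires the transferred model structure in which weak equivalences and fibrations are detected on underlying symmetric spectra.

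Having verified both hypotheses, I would feed the data of the corollary into Theorem~\ref{locmodules}: take $R$ to be the given cofibrant ring symmetric spectrum, that is, a cofibrant object of ${\rm Mon}(\Sp)$, and take $X=M$, an $R$\nobreakdash-module that is cofibrant as a symmetric spectrum. The theorem then yields the homotopy\nobreakdash-unique $R$\nobreakdash-module structure on $LM$ for which $\eta_M\colon M\longrightarrow LM$ is a map of $R$\nobreakdash-modules, which is exactly the asserted conclusion. The only substantive ingredient beyond bookkeeping is the appeal to admissibility: the positive model structure is forced upon us precisely because the sphere spectrum $S$ is not cofibrant, so the classical Berger--Moerdijk criterion is unavailable and the existence of the two transferred structures genuinely rests on the Elmendorf--Mandell result Theorem~\ref{e-m}. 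I anticipate no further obstacle, as every remaining step is a direct specialisation of an already established statement.
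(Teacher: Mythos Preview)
Your proposal is correct and follows essentially the same route as the paper: verify the two transferred-model-structure hypotheses of Theorem~\ref{locmodules} by realising both $\Oper(\Sp)$ and ${\rm Mon}(\Sp)$ as categories of algebras over simplicial-set-valued operads ($S^C_{\V}$ and $\mathcal{A}ss$) and invoking the Elmendorf--Mandell admissibility Theorem~\ref{e-m}, then specialise. Your write-up is simply a more explicit unpacking of the paper's terse three-sentence proof.
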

\begin{proof}
The category $\Sp$ is a monoidal $\sSets$-category, where $\sSets$ denotes the category of simplicial sets and any coloured operad in $\sSets$ is 
admissible in $\Sp$ by Theorem~\ref{e-m}. Thus, the category of operads in $\Sp$ and the category of ring symmetric spectra both admit a transferred model structure. The result now follows directly from Theorem~\ref{locmodules}.
\end{proof}

\end{document}